\newtheorem{definition}{Definition}[section]
\newtheorem{theorem}[definition]{Theorem}
\newtheorem{proposition}[definition]{Proposition}
\newtheorem{lemma}[definition]{Lemma}
\newtheorem{corollary}[definition]{Corollary}
\newtheorem{remark}[definition]{Remark}
\newtheorem{example}[definition]{Example}
\newcommand{\lra}{\longrightarrow}
 \newcommand{\ele}{\mathcal L}  \newcommand{\ka}{\mathcal K}
\def\Lb{\operatorname{\textbf{\textsf{Lb}}}}
\def\nLie{\operatorname{{}_{n}\textbf{\textsf{Lie}}}}
\def\nLb{\operatorname{{}_{n}\textbf{\textsf{Lb}}}}
\def\qLb{\operatorname{{}_{q}\textbf{\textsf{Lb}}}}
\def\pLb{\operatorname{{}_{p}\textbf{\textsf{Lb}}}}
\def\XLb{\operatorname{\textbf{\textsf{X}}\textbf{\textsf{Lb}}}}
\def\XnLb{\operatorname{\textbf{\textsf{X}}{}_{n}\textbf{\textsf{Lb}}}}
\def\XqLb{\operatorname{\textbf{\textsf{X}}{}_{q}\textbf{\textsf{Lb}}}}
\def\XpLb{\operatorname{\textbf{\textsf{X}}{}_{p}\textbf{\textsf{Lb}}}}
\def\X2Lb{\operatorname{\textbf{\textsf{X}}{}_{2}\textbf{\textsf{Lb}}}}
\def\Set{\operatorname{\textbf{\textsf{Set}}}}
\def\U{\operatorname{\frak{U}}}
\def\XU{\operatorname{\frak{XU}}}
\def\ele{\operatorname{\mathcal{L}}}
\def\I{\operatorname{\mathcal{I}}}
\def\Up{\operatorname{\mathcal{J}}}
\def\id{\operatorname{id}}
\DeclareMathOperator{\Ker}{\mathsf {Ker}}
\DeclareMathOperator{\Coker}{\mathsf {Coker}}
\DeclareMathOperator{\Img}{\mathsf {Im}}
\DeclareMathOperator{\Z}{\mathsf {Z}}
\begin{document}
\title[Notes on Leibniz $n$-algebras]{Notes on Leibniz $n$-algebras}

\author[J. M. Casas]{Jos\'e Manuel Casas \orcidlink{0000-0002-6556-6131}}
\address{[J. M. Casas] Department of Applied Mathematics I, and CITMAga, E. E. Forestal,  University of Vigo, 36005 Pontevedra, Spain}
\email{jmcasas@uvigo.es}

\author[E. Khmaladze]{Emzar Khmaladze \orcidlink{0000-0001-9492-982X}}
\address{[E. Khmaladze] The University of Georgia, Kostava St. 77a, 0171 Tbilisi, Georgia \& A. Razmadze Mathematical Institute of Tbilisi State University,
	Tamarashvili St. 6, 0177 Tbilisi, Georgia   }
\email{e.khmaladze@ug.edu.ge}

\author[M. Ladra]{Manuel Ladra \orcidlink{0000-0002-0543-4508}}
\address{[M. Ladra] Department of Mathematics, CITMAga, Universidade de Santiago de Compostela, 15782 Santiago de Compostela, Spain}
\email{manuel.ladra@usc.es}

\begin{abstract}
We analyze behaviors of generalized forgetful and Daletskii-Takhtajan's functors on perfect objects and crossed modules of Leibniz $n$-algebras. Then we give applications to homology and universal central extensions of Leibniz $n$-algebras.
\end{abstract}

\subjclass[2010]{17A32, 17B55.}

\keywords{Leibniz $n$-algebra, forgetful functor,  Daletskii-Takhtajan's functor, perfect object, crossed module, universal central extension, homology.}

\maketitle

\def\Lb{\operatorname{\textbf{\textsf{Lb}}}}
\def\nLb{\operatorname{{}_{n}\textbf{\textsf{Lb}}}}
\def\qLb{\operatorname{{}_{q}\textbf{\textsf{Lb}}}}
\def\pLb{\operatorname{{}_{p}\textbf{\textsf{Lb}}}}
\def\Set{\operatorname{\textbf{\textsf{Set}}}}

\def\U{\operatorname{\frak{U}}}
\def\D{\operatorname{\frak{D}}}

\def\ad{\operatorname{\text{ad}}}

\numberwithin{equation}{section}

\section{Introduction}

Leibniz $n$-algebras were introduced in \cite{CLP} as the non-skew symmetric version of Nambu algebras \cite{DT, Na} or Lie $n$-algebras \cite{Fi}.
For $n=2$ the Leibniz $2$-algebras are ordinary Leibniz algebras. Thus, Leibniz $n$-algebras are simultaneous generalization of Lie, Leibniz and Lie $n$-algebras.

In the development of structure theory of Leibniz $n$-algebras one often finds analogies with results established in Lie $n$-algebras and Leibniz algebras. One such direction of the study is the use of \emph{forgetful functor} $\U_n$ which assigns to a Leibniz algebra $\ele$ the Leibniz $n$-algebra with the same underlying vector space $\ele$ endowed with the $n$-ary operation given by $[l_1, ..., l_n] = [l_1,[l_2, ...,[l_{n-1},l_n]\dots]]$.


On the other hand, in the investigation of homological aspects of Leibniz $n$-algebras treated in \cite{Ca, Ca 1, Ca 2}, one exploits the
remarkable properties of the so-called \emph{Daletskii-Takhtajan's functor}  $\D_n$ which assigns to a Leibniz $n$-algebra $\ele$ the Leibniz algebra with the underlying vector space $\ele^{\otimes(n-1)}$ and with the bracket operation given by 
\[
[l_1\otimes\cdots\otimes l_{n-1}, l'_1\otimes\cdots\otimes
l'_{n-1}]=\underset{\scriptsize{1\leq i\leq n}}\sum
l_1\otimes\cdots\otimes
[l_i,l'_1,\dots,l'_{n-1}]\otimes\cdots\otimes l_{n-1}.
\]


In this paper we choose to establish further properties of the generalized forgetful and Daletskii-Takhtajan's functors between categories of Leibniz $n$-algebras and Leibniz $p$-algebras, where $p=k(n-1)+1$ for some natural number $k$. In particular, we prove that the generalized forgetful functor preserves perfect objects (Theorem \ref{Theorem perfect}), actions (Lemma \ref{Lemma action}) and crossed modules (Proposition \ref{Prop U preserves CM}).
In Section \ref{Section D functor}, we show that the Daletskii-Takhtajan's functor does not preserve perfect objects in general, by presenting counter-examples (Example \ref{Example countre}).
In Section \ref{Section homology}, we construct a homomorphism between homologies with coefficients in a trivial co-representation of a Leibniz algebra $\ele$ and the Leibniz $n$-algebra $\U_{n}(\ele)$. And finally, as an application, in Section \ref{Section UCE}, we analyze the relationships between the universal central extensions of a perfect Leibniz $n$-algebra in the categories of Leibniz $n$-algebras and Leibniz $p$-algebras.

\subsection*{Conventions and notation.} In the whole paper $\mathbb{K}$ denotes the ground field and all tensor products are taken over $\mathbb{K}$. Linear maps are $\mathbb{K}$-linear maps and vector spaces are $\mathbb{K}$-vector spaces as well.  $n$ is a natural number, $n\geq 2$ and $p=k(n-1)+1$, $q=\kappa(p-1)+1=k\kappa(n-1)+1$ for some natural numbers $k$, $\kappa$.

\section{Leibniz $n$-algebras}

The following definition first appeared in \cite{CLP}.

\begin{definition}  A Leibniz $n$-algebra  is a vector space ${\ele}$
equipped with an $n$-linear operation ($n$-ary bracket) $[-, \dots,-] : {\ele}^{\otimes n} \to {\ele}$  satisfying the following fundamental identity for all $x_1, \dots, x_n, y_2, \dots, y_n \in {\ele}$:
\begin{equation} \label{FI}
\begin{array}{l}
[[x_1, x_2, \dots, x_n],y_2,\dots,y_{n}] =
  \displaystyle \sum_{i=1}^n [x_1,\dots, x_{i-1}, [ x_i,y_2,\dots,
y_{n}],x_{i+1},\dots,x_n]
\end{array}
\end{equation}
\end{definition}

In addition, if the $n$-ary bracket is skew-symmetric, that is
\[
[x_{\sigma(1)}, \dots, x_{\sigma(n)}] = {\text{sgn}(\sigma)} [x_1, \dots, x_n],
\]
 for all $\sigma \in S_n$, then ${\ele}$ is said to be \emph{a  Lie $n$-algebra or Filippov algebra } \cite{Fi, Na}. Here $S_n$ stands for the permutation group on $n$ elements and $\text{sgn}(\sigma)\in \{-1,\ 1\}$ denotes the
signature of $\sigma$.

\emph{A homomorphism} of Leibniz $n$-algebras is a linear map preserving the $n$-ary bracket. The respective category of Leibniz $n$-algebras will be denoted by $\nLb$.

\

For $n=2$ the identity (\ref{FI}) is equivalent to the Leibniz identity  $[x,[y,z]] = [[x,y],z] - [[x,z],y]$, so a Leibniz $2$-algebra is simply a Leibniz algebra \cite{Lo}. We write $\Lb$ for $_2{\Lb}$.

\

\begin{example} \
\begin{enumerate}
\item[i)] Any vector space with the trivial $n$-ary bracket is a Leibniz $n$-algebra, called an abelian Leibniz $n$-algebra.
\item[ ii)] Lie triple systems \cite{Li} and Leibniz triple systems \cite{BSO} are (non-Lie) Leibniz $3$-algebras.

\item[ iii)] An associative trialgebra \cite{LR} with the $3$ binary operations $\dashv$, $\vdash$  and $ \perp$, endowed with the ternary bracket
\[
[x,y,z] = x \dashv (y \perp z -z \perp y) - (y \perp z -z \perp
y) \vdash x,
\]
is (in general ) a non-Lie Leibniz $3$-algebra.
\item[ iv)]  Let $\text{char}(\mathbb{K})\mid (n-1)$. Then any one-dimensional vector space with basis $\{ e\}$ is a Leibniz $n$-algebras with respect the $n$-ary bracket given by $[e,e, \dots, e]= \alpha  e$, for any $\alpha \in \mathbb{K}$ (c.f. \cite[Example 2.1 (iv)]{CCGLO}).
\item[v)] Any two-dimensional vector space with
basis $\{e_1, e_2\}$ is a Leibniz $3$-algebras with respect to the ternary bracket given by  $[e_1,e_2,e_2]=e_1$ and $0$ otherwise (c.f. \cite[Theorem 2.14]{CCGLO}).

\end{enumerate}
\end{example}

A subalgebra ${\ele}'$ of a Leibniz $n$-algebra ${\ele}$ is called {\it $n$-sided ideal} of ${\ele}$ if $[l_1, \dots, l_n]
\in {\ele}'$  as soon as $l_i \in {\ele}'$ for some $i$, $1\leq i\leq n$.

For any two $n$-sided ideals ${\ele}'$ and ${\ele}''$  of a Leibniz $n$-algebra ${\ele}$, we denote by $[{\ele}', {\ele}'', {\ele}^{n-2}]$ {\it the commutator} of
${\ele}'$ and ${\ele}''$, that is, the $n$-sided ideal of ${\ele}$ spanned by the brackets $[l_1,\dots,l_i, \dots, l_j,\dots,l_n]$, where necessarily $l_i \in {\ele}'$ and $l_j\in {\ele}''$ for some $i$ and $j$, $1\leq i, j\leq n$, $i\neq j$. Obviously $[{\ele}', {\ele}'', {\ele}^{n-2}] \subseteq {\ele}' \cap {\ele}''$. In particular, if ${\ele}''={\ele}$ (resp. ${\ele}'={\ele}''={\ele}$ ), then we use the notation $[{\ele}', {\ele}^{n-1}]$ (resp. $[{\ele}^n]$) instead of $[{\ele}', {\ele}, {\ele}^{n-2}]$ (resp. $[{\ele}, {\ele}, {\ele}^{n-2}]$).

\

Given a Leibniz $n$-algebra $\mathcal{L}$, it is clear that the quotient of $\mathcal{L}$ by the $n$-sided ideal generated by all elements of the form
\[
[l_1, \dots , l_n]- {\text{sgn}(\sigma)}[l_{\sigma(1)}, \dots ,l_{\sigma(n)}],
\]
 where $l_1, \dots , l_n \in \mathcal{L}$ and $\sigma \in S_n$, is a Lie $n$-algebra, denoted by $\mathcal{L}_{\small{\text{Lie}}}$. This defines a functor
$(-)_{\small{\text{Lie}}}$ from $\nLb$ to the category $\nLie$ of Lie $n$-algebras, which is left adjoint to the full embedding functor $\nLie \hookrightarrow \nLb$.

\begin{definition}\
\begin{enumerate}
\item[i)] The {\it center}  of a Leibniz $n$-algebra ${\ele}$ is the $n$-sided ideal
\[
\Z({\ele}) = \{l \in {\ele} \mid [l_1, \dots, l_{i-1},l,l_{i+1}, \dots, l_n] = 0, \ \text{where} \  1 \leq i \leq n,  l_j \in {\ele}, j = 1, \dots, \hat{i}, \dots, n \}.
\]

\item[ii)] A Leibniz $n$-algebra ${\ele}$ is said to be {\it perfect} if ${\ele} = [{\ele}^n]$.
\end{enumerate}
\end{definition}

\section{ $\U_p^q$ functors and perfect objects}\label{Section U perfect}

Any Leibniz algebra is also Leibniz $n$-algebra with respect to
the $n$-ary bracket $$[x_1,x_2,\dots,x_n]=[x_1,[x_2,\dots,[x_{n-1},x_n]
\cdots ]]$$ (see \cite[Proposition 3.2]{CLP}). This provides the so-called forgetful functor
\[
\U^n:\Lb\to \nLb, \quad {\ele} \mapsto {\ele}.
\]

In this section we deal with generalizations of the functor $\U^n$ to the categories $\pLb$ ($p=k(n-1)+1$) and  $\qLb$ ($q=\kappa(p-1)+1$).
We need to recall the following definition from \cite{CLP}.
\begin{definition}
Let $A$ be a vector space equipped with an $n$-linear operation $\omega : A^{\otimes n} \to A$. A map
$f:A \to A$ is said to be a derivation with respect to $\omega$  if
\[
f(\omega(a_1, \dots,a_n)) = \sum_{i=1}^n  \omega(a_1, \dots, f(a_i), \dots,a_n).
\]
\end{definition}

Note that, given a vector space ${\ele}$, an $n$-linear operation $[-,\stackrel{n}\dots,-] : {\ele}^{\otimes n} \to {\ele}$ satisfies the fundamental identity (\ref{FI})  if and only if, for all $x_2,\cdots, x_{n}\in {\ele}$, the map
\begin{equation}\label{ad}
\ad_{x_2,\cdots, x_{n}}:{\ele}\to {\ele}, \qquad x\mapsto [x,x_2,\cdots, x_{n}],
\end{equation}
is a derivation with respect to $[-,\stackrel{n}\dots,-]$.


\begin{proposition} \label{derivation}
Let $A$ be a vector space and $\omega_n : A^{\otimes n} \to A$ be an $n$-linear operation. Let $\omega_p :A^{\otimes p} \to A$
be given by
\begin{equation} \label{structure}
\omega_p(a_1,a_2,\dots,a_{p})
  =\omega_n\Big(a_1, \dots a_{n-1},\omega_n\big(a_n, \dots, a_{2n-2}, \omega_n(\dots,\omega_n(a_{p-n+1}, \dots, a_{p}) \dots)\big)\Big).
\end{equation}
Then, a derivation $f:A\to A$ with respect to $\omega_n$ is a derivation with respect  to $\omega_p$.
\end{proposition}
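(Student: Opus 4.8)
The plan is to argue by induction on $k$, where $p = k(n-1)+1$. The essential structural observation is that the nested expression (\ref{structure}) can be peeled from the outside. Setting $m = (k-1)(n-1)+1$, so that $m = p-(n-1)$ and the final $p-n+1 = m$ entries feed the $(k-1)$-fold nested composition, the defining formula reads
\[
\omega_p(a_1, \dots, a_p) = \omega_n\big(a_1, \dots, a_{n-1}, \omega_m(a_n, \dots, a_p)\big),
\]
where $\omega_m$ is assembled from $\omega_n$ by the very same recipe (\ref{structure}) with $k-1$ nesting levels. This identity reduces the claim to a single use of the $\omega_n$-derivation property combined with the induction hypothesis for $\omega_m$.

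For the base case $k=1$ one has $p=n$ and $\omega_p = \omega_n$, so there is nothing to prove. For the inductive step I would assume that $f$ is a derivation with respect to $\omega_m$ and compute $f\big(\omega_p(a_1, \dots, a_p)\big)$ by first applying the hypothesis that $f$ is a derivation with respect to $\omega_n$ to the $n$ entries $a_1, \dots, a_{n-1}, \omega_m(a_n, \dots, a_p)$. This splits the value into the $n-1$ terms
\[
\omega_n\big(a_1, \dots, f(a_i), \dots, a_{n-1}, \omega_m(a_n, \dots, a_p)\big), \qquad 1 \le i \le n-1,
\]
each of which equals $\omega_p(a_1, \dots, f(a_i), \dots, a_p)$ by the decomposition above, together with the one remaining term $\omega_n\big(a_1, \dots, a_{n-1}, f(\omega_m(a_n, \dots, a_p))\big)$.

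To this last term I would apply the induction hypothesis, writing $f\big(\omega_m(a_n, \dots, a_p)\big) = \sum_{j=n}^{p} \omega_m(a_n, \dots, f(a_j), \dots, a_p)$, and then exploit the linearity of $\omega_n$ in its final slot to pull $\omega_n(a_1, \dots, a_{n-1}, -)$ inside the sum; each resulting summand is $\omega_p(a_1, \dots, f(a_j), \dots, a_p)$ for $n \le j \le p$. Collecting the two groups of terms gives exactly $\sum_{i=1}^{p} \omega_p(a_1, \dots, f(a_i), \dots, a_p)$, which is the derivation property for $\omega_p$. I expect the only genuine obstacle to be the index bookkeeping—checking that $m = p-(n-1) = (k-1)(n-1)+1$ so that both the outer decomposition and the induction hypothesis line up—while the substantive ingredient is simply the multilinearity of $\omega_n$, which is what licenses distributing $\omega_n(a_1, \dots, a_{n-1}, -)$ over the expansion of $f(\omega_m(\dots))$ in the last argument.
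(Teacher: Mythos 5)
Your proof is correct and takes essentially the same approach as the paper: the paper's proof also peels the nested expression from the outside, applies the $\omega_n$-derivation property once to the outermost bracket, and then iterates inward (written informally as ``$=\dots=$''), which is exactly the recursion you formalize as induction on $k$. Your version merely makes explicit the induction and the use of multilinearity of $\omega_n$ in its last slot, both of which the paper leaves implicit.
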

\begin{proof} This follows by the following straightforward calculations:
\begin{align*}
f(\omega_p &(a_1,\dots,a_{p})) \\
= & f\Big(\omega_n\Big(a_1, \dots a_{n-1},\omega_n\big(a_n, \dots, a_{2n-2}, \omega_n(\dots,\omega_n(a_{p-n+1}, \dots, a_{p}) \dots)\big)\Big)\Big)\\
 = & \omega_n\Big(f(a_1), \dots a_{n-1},\omega_n\big(a_n, \dots,a_{2n-2},\omega_n( \dots,\omega_n(a_{p-n+1}, \dots, a_{p}) \dots)\big)\Big) \\
  &+\cdots +\omega_n\Big(a_1, \dots f(a_{n-1}),\omega_n\big(a_n, \dots,a_{2n-2},\omega_n( \dots,\omega_n(a_{p-n+1}, \dots, a_{p}) \dots)\big)\Big) \\
   &+\omega_n\Big(a_1, \dots a_{n-1},f\big(\omega_n(a_n, \dots,a_{2n-2},\omega_n( \dots,\omega_n(a_{p-n+1}, \dots, a_{p}) \dots))\big)\Big) \\
 = &\dots = \sum_{i=1}^{p} \omega_p(a_1, \dots, f(a_i), \dots,a_{p}).
\end{align*}
\end{proof}


\begin{proposition} \label{leib}
Let ${\ele}$ be a Leibinz $n$-algebra with respect to the $n$-ary bracket \linebreak $[-,\stackrel{n}\dots,-] : {\ele}{^{\otimes n}} \to \ele$. Then ${\ele}$ is a Leibniz $p$-algebra  with respect to the $p$-ary bracket
\[
[l_1,l_2,\dots,l_{p}]
  =[l_1, \dots l_{n-1},[l_n, \dots, l_{2n-2}, [\dots,[l_{p-n+1}, \dots, l_{p}] \dots]]].
\]
\end{proposition}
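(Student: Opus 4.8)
The plan is to verify the fundamental identity (\ref{FI}) for the $p$-ary bracket $\omega_p$ of (\ref{structure}) without ever expanding the $p$-fold nested expression, by leaning on the derivation characterization recorded just before Proposition \ref{derivation}. That characterization is a general fact about an $m$-linear operation of any arity $m$, and I intend to use it in both directions. In particular, to conclude that $\omega_p$ satisfies the fundamental identity it suffices to show that for every choice of $y_2, \dots, y_p \in \ele$ the map
\[
\ad^{(p)}_{y_2,\dots,y_p} : \ele \to \ele, \qquad x \mapsto [x, y_2, \dots, y_p],
\]
is a derivation with respect to $\omega_p$.

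The key observation is that, because the distinguished first argument of the $p$-ary bracket always occupies the first slot of the outermost $n$-ary bracket in (\ref{structure}), the $p$-ary adjoint map is literally an $n$-ary adjoint map. Writing
\[
z = [y_n, \dots, y_{2n-2}, [\dots, [y_{p-n+1}, \dots, y_p] \dots]]
\]
for the nested bracket built from the last $p-n+1$ entries (the condition $p = k(n-1)+1$ is exactly what makes these entries close up into an integer number of nested $n$-brackets), the definition of $\omega_p$ gives
\[
[x, y_2, \dots, y_p] = [x, y_2, \dots, y_{n-1}, z] = \ad^{(n)}_{y_2,\dots,y_{n-1},z}(x),
\]
so that $\ad^{(p)}_{y_2,\dots,y_p} = \ad^{(n)}_{y_2,\dots,y_{n-1},z}$. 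Since $\ele$ is a Leibniz $n$-algebra, the characterization applied to $\omega_n$ says that $\ad^{(n)}_{y_2,\dots,y_{n-1},z}$ is a derivation with respect to $\omega_n$; Proposition \ref{derivation} then upgrades this to the statement that the very same map is a derivation with respect to $\omega_p$. Thus $\ad^{(p)}_{y_2,\dots,y_p}$ is a derivation with respect to $\omega_p$, and as $y_2, \dots, y_p$ were arbitrary, the characterization (read now as ``the derivation property implies (\ref{FI})'') yields that $\omega_p$ satisfies the fundamental identity. Hence $\ele$ with the $p$-ary bracket is a Leibniz $p$-algebra.

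The step I expect to require the most care is the identification $\ad^{(p)}_{y_2,\dots,y_p} = \ad^{(n)}_{y_2,\dots,y_{n-1},z}$, since it depends on the specific right-nested shape of (\ref{structure}) and on the fact that it is precisely the \emph{first} slot that is left free in the fundamental identity. Once this is recognized, Proposition \ref{derivation} carries all the remaining weight and no direct manipulation of the nested identity is needed; the arithmetic constraint $p=k(n-1)+1$ enters only to guarantee that $z$ is itself a well-formed iterated bracket.
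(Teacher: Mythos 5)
Your proposal is correct and follows essentially the same route as the paper's own proof: the paper likewise reduces the fundamental identity for the $p$-ary bracket to the derivation characterization, observes that $\ad_{x_2,\dots,x_p}=\ad_{x_2,\dots,x_{n-1},[x_n,\dots,x_{2n-2},[\dots,[x_{p-n+1},\dots,x_p]\dots]]}$ by the very definition of the $p$-ary bracket, and then invokes Proposition \ref{derivation}. Your write-up merely makes explicit the two directions in which the derivation characterization is used, which the paper leaves implicit.
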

\begin{proof} We know that, for any $x_2, \dots, x_{n} \in \ele$,  $\ad_{x_2, \dots, x_{n}} : {\ele} \to {\ele}$ from (\ref{ad})
is a derivation with respect to the $n$-ary bracket.
We must show that, for all $x_2, \dots, x_{p}\in \ele$,  $\ad_{x_2, \dots, x_{p}} : {\ele} \to {\ele}$ is also a derivation with respect to the described $p$-linear bracket. Immediately by definition of the $p$-linear bracket, we get
\[
\ad_{x_2, \dots, x_{p}}=\ad_{x_2, \dots x_{n-1},[x_{n}, \dots, x_{2n-2}, [\dots,[x_{p-n+1}, \dots, x_{p}] \dots]]}.
\]
Then the result follows by Proposition \ref{derivation}.
\end{proof}

\begin{remark} Let us observe that Proposition \ref{derivation} and Proposition \ref{leib} in the particular case $n=2$ give Proposition 2.2 and Proposition 3.2 in \cite{CLP}, respectively.
\end{remark}

As a consequence of Proposition \ref{leib}, there exists a functor
\[
\U_n^p:\nLb \to \pLb, \quad {\ele}\mapsto {\ele}.
\]
It is clear that, in particular, $\U_2^n$ is the same as  $\U^n:\Lb\to \nLb$ considered above. Moreover, we have the following commutative diagram of categories and functors
\begin{equation}\label{cd1}
\xymatrix{
	\Lb \ar[r]^{\U^n}  \ar[d]_{\U^p} & \nLb \ar[d]^{\U_n^q}  \\
	\pLb \ar[r]_{\U_p^q} & \qLb .
}
\end{equation}

\begin{theorem} \label{Theorem perfect}
$\U_n^p:\nLb\to \pLb$ is an exact functor preserving and reflecting perfect objects.
\end{theorem}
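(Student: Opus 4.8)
The plan is to treat the three assertions separately, in increasing order of difficulty, keeping in mind that $\U_n^p$ is the identity on underlying vector spaces and sends a homomorphism to the same underlying linear map (which indeed preserves the derived $p$-ary bracket of Proposition~\ref{leib}, so the functor is well defined and faithful).

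\emph{Exactness} I expect to be essentially formal. Injectivity, surjectivity, kernels and images of homomorphisms are all computed at the level of the underlying vector spaces, and these are left untouched by $\U_n^p$; hence the functor carries a short exact sequence $0 \to {\ele}' \to {\ele} \to {\ele}'' \to 0$ in $\nLb$ to a short exact sequence in $\pLb$, and is therefore exact.

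\emph{Reflecting perfect objects.} The key observation is that the $p$-ary bracket is, at its outermost level, simply an $n$-ary bracket: by definition
\[
[l_1,\dots,l_{p}] = [l_1,\dots,l_{n-1},R], \qquad R = [l_n,\dots,l_{2n-2},[\dots,[l_{p-n+1},\dots,l_{p}]\dots]] \in {\ele}.
\]
Thus every $p$-ary bracket is in particular an $n$-ary bracket and so lies in $[{\ele}^n]$; since $[{\ele}^p]$ (the perfect commutator computed in $\U_n^p({\ele})$) is spanned by $p$-ary brackets, we get $[{\ele}^p] \subseteq [{\ele}^n]$ as subspaces of ${\ele}$. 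Consequently, if $\U_n^p({\ele})$ is perfect then ${\ele} = [{\ele}^p] \subseteq [{\ele}^n] \subseteq {\ele}$, forcing ${\ele} = [{\ele}^n]$, i.e. ${\ele}$ is perfect.

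\emph{Preserving perfect objects} is the substantive direction, which I would argue by induction on $k$, where $p = k(n-1)+1$. For $k=1$ we have $p=n$ and $\U_n^n = \id$, so there is nothing to prove. For the inductive step set $p' = (k-1)(n-1)+1$, so that $p = p' + (n-1)$ and a $p$-ary bracket has the shape $[l_1,\dots,l_{n-1},R]$ with $R$ a $p'$-ary bracket. Assuming ${\ele}$ perfect, the induction hypothesis gives that $\U_n^{p'}({\ele})$ is perfect, i.e. every element of ${\ele}$ is a finite sum of $p'$-ary brackets. Now take an arbitrary $n$-ary bracket $[a_1,\dots,a_n]$; writing its last entry as $a_n = \sum_\alpha R_\alpha$ with each $R_\alpha$ a $p'$-ary bracket and using multilinearity, we get $[a_1,\dots,a_n] = \sum_\alpha [a_1,\dots,a_{n-1},R_\alpha]$, and each summand is precisely a $p$-ary bracket. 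Since ${\ele} = [{\ele}^n]$ is spanned by such $n$-ary brackets, this shows ${\ele} = [{\ele}^n] \subseteq [{\ele}^p]$, whence $\U_n^p({\ele})$ is perfect.

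The only point requiring care — and the heart of the argument — is matching the nesting pattern: because the $p$-ary bracket always nests inside its \emph{last} slot, the expansion via perfectness must be applied repeatedly to the innermost last argument, which is exactly what the induction on $k$ encodes. Once this bookkeeping is set up, the two inclusions $[{\ele}^p] \subseteq [{\ele}^n]$ and (under perfectness) $[{\ele}^n] \subseteq [{\ele}^p]$ fall out cleanly and together make the equivalence ``${\ele}$ perfect $\iff \U_n^p({\ele})$ perfect'' transparent.
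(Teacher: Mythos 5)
Your proposal is correct and follows essentially the same route as the paper: the inclusion $[{\ele}^p]\subseteq[{\ele}^n]$ for reflection is exactly the paper's converse step, and your induction on $k$ is just a formalized version of the paper's iterated expansion of the last bracket entry (``and so on, after $k$ steps''), with exactness dismissed as formal in both. The only presentational difference is that your induction makes the bookkeeping of the nested last slot explicit, where the paper leaves it informal.
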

\begin{proof}
Exactness of $\U_n^p$ is obvious. Hence, we have to prove that a Leibniz $n$-algebra ${\ele}$ is perfect  if and only if $\U_n^p({\ele})$ is a perfect Leibniz $p$-algebra.

First suppose that ${\ele}$ is a perfect Leibniz $n$-algebra, i.e. ${\ele} =[{\ele}^n]$. Then any element $x\in {\ele}$ has the form $x = \sum \lambda_1 [x_{11}, \dots,x_{1n}]$. By the same argument,  the element $x_{1n}\in \ele$ has the form
 $x_{1n} = \sum \lambda_2 [x_{21}, \dots,x_{2n}]$ and so on, after $k$ steps (recall that $p=k(n-1)+1$) we get
 \[
 x= \sum \lambda_1 \lambda_2 \dots \lambda_{k} \omega_p (x_{11}, \dots, x_{1,n-1}, x_{21}, \dots , x_{2,n-1}, \dots, x_{k1},\dots, x_{kn}  )\in [{\ele}^p],
 \]
  where $\omega_p:{\ele}^{\otimes p}\to {\ele}$ is defined in the same way as in (\ref{structure}), with $\omega_n(x_1, \dots,x_n) = [x_1, \dots, x_n]$.

Conversely, by definition of $p$-ary bracket on  $\U_n^p({\ele})$, it is clear that $[{\ele}^p]\subseteq [{\ele}^n] \subseteq \ele$. So, if
$\ele = [ {\ele}^p ]$, then $\ele=[ {\ele}^n ]$.
\end{proof}

As a trivial consequence of Theorem \ref{Theorem perfect}, we deduce that each functor in the diagram  (\ref{cd1}) preserves perfect objects. In particular, if  ${\ele}$ is a perfect Leibniz algebra, then ${\ele}$ is a perfect Leibniz $3$-algebra, a perfect Leibniz $4$-algebra and so on.

\section{ $\U_p^q$ functors and crossed modules}

In this section we show that the functor $\U_p^q$ can be extended to the respective categories of crossed modules. Before that, let's recall some necessary notions about actions and crossed modules of Leibniz $n$-algebras from \cite{CKL1}.

\begin{definition}\label{Def action} We say that a Leibniz $n$-algebra $\mathcal{P}$ acts on
a Leibniz $n$-algebra $\mathcal{L}$ if $2^n-2$ $n$-linear maps
\[
[-,\stackrel{n}{\cdots},-]:\mathcal{L}^{\otimes i_1}\otimes \mathcal{P}^{\otimes
j_1}\otimes\cdots\otimes \mathcal{L}^{\otimes i_m}\otimes
\mathcal{P}^{\otimes j_m}\to \mathcal{L}
\]
are given, where $1\leq m\leq n-1$, $\underset{\scriptsize{1\leq
k\leq m}}\sum (i_k+j_k)=n$, $0\leq i_k \leq n-1$ and at least one
$i_k\neq 0$, $0\leq j_k \leq n-1$ and at least one $j_k\neq 0$,
such that $2^{2n-1}-2$ equalities hold which are obtained from the fundamental identity
(\ref{FI}) by taking exactly $i$ of the variables
$x_1,\dots,x_n,y_1,\dots, y_{n-1}$ in $\mathcal{L}$ and all the
others in $\mathcal{P}$, 
and by changing $i=1,\dots,2n-2$.

\end{definition}

For example, if $\mathcal{L}$ is an $n$-sided ideal of the Leibniz $n$-algebra $\mathcal{P}$, then the
$n$-ary bracket in $\mathcal{P}$ yields an action of $\mathcal{P}$ on $\mathcal{L}$.

\begin{lemma}\label{Lemma action}
Let $\mathcal{P}$ and $\mathcal{L}$ be Leibniz $n$-algebras and $\mathcal{P}$ acts on $\mathcal{L}$. Then the  Leibniz $p$-algebra $\U_n^p(\mathcal{P})$ acts on the Leibniz $p$-algebra $\U_n^p(\mathcal{L})$.
\end{lemma}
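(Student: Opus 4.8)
\textbf{The plan is to} reduce the claim about actions to the already-established Proposition \ref{leib} by viewing an action of $\mathcal{P}$ on $\mathcal{L}$ as an ordinary $n$-ary bracket on the vector space $\mathcal{L}\oplus\mathcal{P}$. More precisely, the semidirect product $\mathcal{L}\rtimes\mathcal{P}$ carries a natural Leibniz $n$-algebra structure whose $n$-ary bracket restricts to the given brackets on $\mathcal{L}$, on $\mathcal{P}$, and to the mixed action maps on the cross terms; the defining $2^{2n-1}-2$ action identities of Definition \ref{Def action} are precisely what makes this $n$-ary operation on $\mathcal{L}\oplus\mathcal{P}$ satisfy the fundamental identity (\ref{FI}). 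Thus an action of $\mathcal{P}$ on $\mathcal{L}$ is the same datum as a Leibniz $n$-algebra structure on $\mathcal{L}\oplus\mathcal{P}$ for which $\mathcal{L}$ is an $n$-sided ideal and $\mathcal{P}$ is a subalgebra with the given restrictions.

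\textbf{First I would} make this correspondence explicit and then apply $\U_n^p$ to the semidirect product. By Proposition \ref{leib}, the Leibniz $n$-algebra $\mathcal{L}\rtimes\mathcal{P}$ becomes a Leibniz $p$-algebra under the iterated $p$-ary bracket
\[
[z_1,\dots,z_{p}]=[z_1,\dots z_{n-1},[z_n,\dots,z_{2n-2},[\dots,[z_{p-n+1},\dots,z_{p}]\dots]]],
\]
and the underlying vector space is unchanged, so $\U_n^p(\mathcal{L}\rtimes\mathcal{P})$ has underlying space $\mathcal{L}\oplus\mathcal{P}=\U_n^p(\mathcal{L})\oplus\U_n^p(\mathcal{P})$. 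Since $\mathcal{L}$ is an $n$-sided ideal of $\mathcal{L}\rtimes\mathcal{P}$, it remains a $p$-sided ideal of $\U_n^p(\mathcal{L}\rtimes\mathcal{P})$: any iterated bracket with at least one entry in $\mathcal{L}$ lands in $\mathcal{L}$ because the innermost $n$-ary bracket already outputs into $\mathcal{L}$ and ideal-ness propagates outward through the iteration. Likewise $\mathcal{P}$ remains a $p$-subalgebra. Reading off the mixed $p$-ary brackets on $\U_n^p(\mathcal{L}\rtimes\mathcal{P})$ with entries distributed between $\mathcal{L}$ and $\mathcal{P}$ then yields exactly the $2^p-2$ action maps exhibiting an action of $\U_n^p(\mathcal{P})$ on $\U_n^p(\mathcal{L})$, and the required action identities for $p$ are automatically the fundamental identity (\ref{FI}) for the Leibniz $p$-algebra $\U_n^p(\mathcal{L}\rtimes\mathcal{P})$, which holds by Proposition \ref{leib}.

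\textbf{The hard part will be} bookkeeping the correspondence between actions and semidirect-product structures cleanly, rather than any deep step: one must verify that the action axioms of Definition \ref{Def action} are genuinely equivalent to the fundamental identity on $\mathcal{L}\oplus\mathcal{P}$ (this is the content that lets us import Proposition \ref{leib}), and then confirm that restricting the resulting $p$-ary bracket to the various $\mathcal{L}/\mathcal{P}$ splittings recovers well-defined $p$-linear maps of the shape demanded by Definition \ref{Def action} for the integer $p$. Both verifications are multilinear-algebra routine once the semidirect product is in place, so I would state the semidirect-product reformulation as the key lemma, invoke Proposition \ref{leib} to transport it, and leave the componentwise identification of the action maps as a direct check.
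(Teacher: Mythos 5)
Your proof is correct, but it takes a genuinely different route from the paper. The paper's proof is direct: it writes down the $2^p-2$ mixed $p$-linear maps explicitly as iterated $n$-ary action brackets (formula (\ref{equation action})) and then re-runs the derivation argument of Propositions \ref{derivation} and \ref{leib} in the mixed setting to check the required identities. You instead package the action into a single Leibniz $n$-algebra, the semidirect product $\mathcal{L}\rtimes\mathcal{P}$, apply Proposition \ref{leib} once to that algebra, observe that $\mathcal{L}$ stays a $p$-sided ideal and $\mathcal{P}$ a $p$-subalgebra of $\U_n^p(\mathcal{L}\rtimes\mathcal{P})$ (your ``ideal-ness propagates outward'' argument is right), and read off the action by restriction. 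What your approach buys: all $2^{2p-1}-2$ mixed fundamental identities for $p$ come for free as restrictions of the single fundamental identity (\ref{FI}) in $\U_n^p(\mathcal{L}\rtimes\mathcal{P})$, so no computation with entries split between two spaces is ever needed; moreover the extracted action visibly coincides with the iterated one in (\ref{equation action}), which is what Proposition \ref{Prop U preserves CM} later needs. What it costs: the equivalence between actions and semidirect-product structures, which you correctly flag as the real content to verify. You should note that this equivalence need not be proved from scratch --- the forward direction (action $\Rightarrow$ semidirect product is a Leibniz $n$-algebra) is established in \cite{CKL1}, the paper's own source for Definition \ref{Def action}, and the backward direction you need is only the easy observation, stated in the paper right after Definition \ref{Def action}, that the bracket of an ambient algebra induces an action on any $n$-sided ideal, combined with restriction of that action along the subalgebra inclusion $\mathcal{P}\hookrightarrow\mathcal{L}\rtimes\mathcal{P}$. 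With that citation in place your argument is complete and arguably cleaner than the paper's.
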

\begin{proof}
Since $\U_n^p(\mathcal{P})=\mathcal{P}$ and $\U_n^p(\mathcal{L})= \mathcal{L} $ are Leibniz $p$-algebras with the $p$-ary brackets as described in Proposition \ref{leib}, we need to construct $2^p-2$ $p$-linear maps
\[
[-,\stackrel{p}{\cdots},-]:\mathcal{L}^{\otimes i_1}\otimes \mathcal{P}^{\otimes
j_1}\otimes\cdots\otimes \mathcal{L}^{\otimes i_m}\otimes
\mathcal{P}^{\otimes j_m}\to \mathcal{L},
\]
with $\underset{\scriptsize{1\leq
k\leq m}}\sum (i_k+j_k)=p$ and satisfying the same conditions as in Definition \ref{Def action}. Let us fix such $i_1, \cdots, i_m$, $j_1, \cdots, j_m$
and define the corresponding $p$-linear map $[-,\stackrel{p}{\cdots},-]$ by
\begin{align}\label{equation action}
[x_1, & \dots, x_{i_1}, x_{i_1 +1}, \dots, x_{i_1+j_1}, \dots,   x_{p}] \notag \\
 & =[x_1, \dots x_{n-1},[x_n, \dots, x_{2n-2}, [\dots,[x_{p-n+1}, \dots, x_{p}] \dots]]],
\end{align}
where any $x$ element belongs to either $\mathcal{L}$ or $\mathcal{P}$, but at the same time, at least one of these elements belongs to $\mathcal{L}$ and at least one belongs to $\mathcal{P}$. In the right hand side of equation (\ref{equation action}) the iterated $n$-ary brackets are given by the action of $\mathcal{P}$ on $\mathcal{L}$ and therefore satisfy the fundamental equation (\ref{FI}). Then, by the same argument as in Proposition \ref{leib}, it is easy to see that $p$-linear brackets defined by (\ref{equation action}) also satisfy the corresponding fundamental identity, i.e. they indeed define an action of $\U_n^p(\mathcal{P})$ on $\U_n^p(\mathcal{L})$.
\end{proof}

\begin{definition}\cite{CKL1}\label{Def crossed mod}
A crossed module of Leibniz $n$-algebras is a homomorphism
$\mu:\mathcal{L}\to \mathcal{P}$ together with an action of
$\mathcal{P}$ on $\mathcal{L}$ satisfying the following
conditions:
\begin{enumerate}
\item[(CM1)] $\mu$ is compatible with the action of
$\mathcal{P}$ on $\mathcal{L}$, that is,
\begin{align*}
&\mu[l_1^1,\dots,l_{i_1}^1,x_1^1,\dots,x_{j_1}^1,\dots,l_1^m,\dots,l_{i_m}^m,x_1^m,\dots,x_{j_m}^m]\\&=
[\mu l_1^1,\dots,\mu l_{i_1}^1,x_1^1,\dots,x_{j_1}^1,\dots,\mu
l_1^m,\dots,\mu l_{i_m}^m,x_1^m,\dots,x_{j_m}^m]\;,
\end{align*}
for $0\leq i_k,j_k \leq n-1$, $0\leq k\leq m$, $1\leq m\leq n-1$
such that $\underset{\scriptsize{1\leq k\leq m}}\sum (i_k+j_k)=n$,
at least one $i_k\neq 0$ and at least one $j_k\neq 0$. Here and below, all $l$'s belong to $\mathcal{L}$ and all $x$'s belong to $\mathcal{P}$.
\item[(CM2)] For every $1\leq i \leq n-1$, the $n$-bracket
in $\mathcal{L}$
$$
(*)\ \ \ \ \  \ \ \ \ \ \ \ \  \ \ \  \ \ \ \ \  \ \ \ \ \
   [l_1,l_2,\dots,l_n ] \ \ \ \ \ \ \ \ \ \ \ \ \ \ \ \ \
\ \ \ \  \ \ \ \ \
$$
 is equal to any expression obtained from
$(*)$ by replacing exactly $i$ of the variables $l$'s by $\mu
l$'s.
\item[(CM3)] If $\underset{\scriptsize{1\leq k\leq
m}}\sum i_k\geq 2$, then the expression
$$
(**) \ \ \ \ \  \ \ \
[l_1^1,\dots,l_{i_1}^1,x_1^1,\dots,x_{j_1}^1,\dots,l_1^m,\dots,l_{i_m}^m,x_1^m,\dots,x_{j_m}^m]\
\ \
$$
is equal to any expression obtained from $(**)$ by replacing
exactly one of $l$'s (and so $i$ of $l$'s, for every $1\leq i\leq
\underset{\scriptsize{1\leq k\leq m}}\sum i_k -1$) by $\mu l$.
\end{enumerate}
\end{definition}

{\em A morphism of crossed modules}
$(\mathcal{L}\overset{\mu}\longrightarrow\mathcal{P})\to
(\mathcal{L'}\overset{\mu'}\longrightarrow\mathcal{P'})$ is a pair
$(\alpha,\beta)$, where $\alpha:\mathcal{L}\to \mathcal{L'}$ and
$\beta:\mathcal{P}\to \mathcal{P'}$ are homomorphisms of Leibniz
$n$-algebras satisfying:
\begin{enumerate}
\item[i)] $\mu'\alpha=\beta\mu$, \item[ii)] $\alpha$ preserves the action
of $\mathcal{P}$ via $\beta$, i.e.,
\begin{align*}
&\alpha[l_1^1,\dots,l_{i_1}^1,x_1^1,\dots,x_{j_1}^1,\dots,l_1^m,\dots,l_{i_m}^m,x_1^m,\dots,x_{j_m}^m]\\&=
[\alpha l_1^1,\dots,\alpha l_{i_1}^1,\beta x_1^1,\dots,\beta
x_{j_1}^1,\dots,\alpha l_1^m,\dots,\alpha l_{i_m}^m,\beta x_1^m,\dots,\beta
p_{j_m}^m]\;.
\end{align*}
\end{enumerate}

Denote by ${\XnLb}$ the category of crossed modules of Leibniz $n$-algebras.
\
Note that for $n=2$ the conditions (CM2) and (CM3) are the same and we obtain the definition of crossed module of
Leibniz algebras \cite{LP}. The respective category will be denoted by $\XLb$ instead of ${\X2Lb}$.

\

There are full embedding functors
\[
\I_n^0, \I_n^1 \colon \nLb\to\XnLb
\]
defined by $\I_n^0(\ele)=(0\xrightarrow{ \ 0 \ } \ele)$, $\I_n^1(\ele)=(\ele\xrightarrow{\id_{\ele}} \ele)$
for a Leibniz $n$-algebra $\ele$.

Given a crossed module $\mu:\mathcal{L}\to \mathcal{P}$, then $\Img(\mu)$ is an $n$-sided ideal of $\mathcal{P}$. Let us define three functors
\[
\Up_n^0, \Up_n^1, \Up_n^2 : \XnLb \to \nLb
\]
by $\Up_n^0({\ele}\xrightarrow{\mu} \mathcal{P})=\mathcal{P}\slash \Img(\mu)$, $\Up_n^1({\ele}\xrightarrow{\mu} \mathcal{P})=\mathcal{P}$
 and $\Up_n^2({\ele}\xrightarrow{\mu} \mathcal{P})=\ele$.

\begin{remark}\label{remark_adj_Di_Lb}
	 It is straightforward to check that $\Up_n^i$ is left adjoint to $\I_n^i$ and $\I_n^i$ is left adjoint to
  $\Up_n^{i+1}$ for $i=0,1$.
\end{remark}

\begin{proposition}\label{Prop U preserves CM}
If $\mu:\mathcal{L}\to \mathcal{P}$ is a crossed module of Leibniz $n$-algebras, then ${\U}_n^p(\mu):{\U}_n^p(\mathcal{L})\to {\U}_n^p(\mathcal{P})$ is a crossed module of Leibniz $p$-algebras.
\end{proposition}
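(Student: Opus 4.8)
The plan is to verify the three crossed module axioms (CM1)--(CM3) of Definition \ref{Def crossed mod} at the level of Leibniz $p$-algebras, for the map $\U_n^p(\mu)=\mu$ equipped with the action of $\U_n^p(\mathcal{P})$ on $\U_n^p(\mathcal{L})$ supplied by Lemma \ref{Lemma action}. Since $\U_n^p$ is a functor, $\mu$ is automatically a homomorphism of Leibniz $p$-algebras, so only the compatibility conditions remain to be checked. The essential observation is that every $p$-ary bracket occurring in these conditions is, by Proposition \ref{leib} and equation (\ref{equation action}), an iterated $n$-ary bracket whose constituent $n$-brackets are each governed by the $n$-level axioms.

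First I would repackage the three $p$-level axioms into two statements about a single $p$-ary bracket $B=[a_1,\dots,a_p]$ whose entries lie in $\mathcal{L}\sqcup\mathcal{P}$ with at least one entry in $\mathcal{L}$. Writing $B[S]$ for the bracket obtained by replacing the entries indexed by a set $S$ of $\mathcal{L}$-positions with their $\mu$-images, these are: (A) if $S$ consists of \emph{all} $\mathcal{L}$-positions, then $B[S]=\mu(B)$; and (B) if $S$ omits at least one $\mathcal{L}$-position, then $B[S]=B$. Statement (A) encodes (CM1), its pure-$\mathcal{L}$ instance being just the homomorphism property of $\mu$; statement (B) specializes to (CM2) when $B$ is a pure $\mathcal{L}$-bracket (there $|S|\leq p-1$) and to (CM3) when $B$ is mixed with at least two $\mathcal{L}$-entries, since in each case the bound $|S|\le(\#\mathcal{L}\text{-entries})-1$ is exactly the requirement that $S$ miss an $\mathcal{L}$-position.

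I would then prove (A) and (B) simultaneously by induction on $k$, where $p=k(n-1)+1$. The base case $k=1$ gives $p=n$ and $\U_n^n=\id$, so (A) and (B) are literally the $n$-level axioms (CM1)--(CM3). For the inductive step I peel off the outermost bracket, writing $B=[a_1,\dots,a_{n-1},R]$ with $R=[a_n,\dots,a_p]$ the nested $\U_n^{p'}$-bracket, $p'=p-n+1=(k-1)(n-1)+1$. Here $R\in\mathcal{L}$ exactly when some $a_i$ with $n\le i\le p$ lies in $\mathcal{L}$, and $R\in\mathcal{P}$ otherwise. Splitting $S$ into its inner part (positions inside $R$) and its outer part (among $a_1,\dots,a_{n-1}$), the inductive hypothesis applied to $R$ resolves the inner bracket --- either leaving it unchanged via (B), or turning it into $\mu(R)\in\mathcal{P}$ via (A) --- after which the outer $n$-bracket is settled by the appropriate $n$-level axiom, the number of surviving $\mathcal{L}$-entries (now including the slot occupied by $R$) dictating whether (CM1), (CM2) or (CM3) is invoked.

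The main obstacle is the bookkeeping in this inductive step: one must track, across the several cases, whether the nested slot $R$ is an $\mathcal{L}$- or a $\mathcal{P}$-entry of the outer $n$-bracket and whether it has been fully $\mu$-substituted, and then verify that at least one $\mathcal{L}$-entry survives in the outer bracket precisely when the global hypothesis of (B) holds, so that the correct $n$-level axiom is always available. The delicate case is when $R\in\mathcal{L}$ is itself fully replaced, contributing a $\mu$-image that must be absorbed by an outer application of (CM2)/(CM3); once this case analysis is organized, the result follows.
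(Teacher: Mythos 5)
Your proposal is correct and takes essentially the same approach as the paper: both rest on expressing each mixed $p$-ary bracket supplied by Lemma \ref{Lemma action} as an iterated $n$-ary bracket (via Proposition \ref{leib} and equation (\ref{equation action})) and pushing the $n$-level crossed-module axioms through the nesting, layer by layer. The only difference is one of completeness: the paper explicitly verifies (CM1) alone by this unfolding and declares the remaining conditions ``readily checked,'' whereas your simultaneous induction on $k$ with the statements (A) and (B) carries out the full bookkeeping for all three axioms.
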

\begin{proof}
It can be readily checked that the homomorphism ${\U}_n^p(\mathcal{P})$, with the action of ${\U}_n^p(\mathcal{P})$ on ${\U}_n^p(\mathcal{L})$
described in Lemma \ref{Lemma action}, satisfies conditions (${}_n$CM1)-(${}_n$CM3) for crossed modules in $\pLb$. For instance, we only show that
${\U}_n^p(\mu)$ is compatible with the action of
${\U}_n^p(\mathcal{P})$ on ${\U}_n^p(\mathcal{L})$. In effect,
\begin{align*}
&{\U}_n^p(\mu)[l_1^1,\dots,l_{i_1}^1,x_1^1,\dots,x_{j_1}^1,\dots,l_1^m,\dots,l_{i_m}^m,x_1^m,\dots,x_{j_m}^m]\\
&={\U}_n^p(\mu) [y_1, \dots, y_{i_1}, y_{i_1 +1}, \dots, y_{i_1+j_1}, \dots,   y_{p}]  \\
 & = \mu[y_1, \dots y_{n-1},[y_n, \dots, y_{2n-2}, [\dots,[y_{p-n+1}, \dots, y_{p}] \dots]]]\\
&=[\mu(l_1^1),\dots,\mu(l_{i_1}^1),x_1^1,\dots,x_{j_1}^1,\dots,\mu(l_1^m),\dots,\mu(l_{i_m}^m),x_1^m,\dots,x_{j_m}^m].
\end{align*}
Here we used the following notations:

\noindent $y_1=l_1^1, \ \dots,\ y_{i_1}=l_{i_1}^1, \ y_{i_1+1}=x_1^1, \ \dots, \ y_{i_1+j_1} = x_{j_1}^1, \ \dots, \ y_{p-i_m-j_m+1}=l_{1}^m, \ \cdots, \ y_{p-j_m}=l_{i_m}^m, \  y_{p-j_m+1}=x_{1}^m, \ \dots, \  y_{p}=x_{j_m}^m$.
\end{proof}

The assignment in Proposition \ref{Prop U preserves CM} is functorial and we get a functor ${\XU}_n^p: \XnLb \to \XpLb $. Let us denote ${\XU}_2^n: \X2Lb \to \XnLb$ by ${\XU}^n$. By (\ref{cd1}) we get immediately the following commutative diagram of categories and functors
\begin{equation*}\label{cd2}
\xymatrix{
	\XLb \ar[r]^{\XU^n}  \ar[d]_{\XU^p} & \XnLb \ar[d]^{\XU_n^q}  \\
	\XpLb \ar[r]_{\XU_p^q} & \XqLb .
}
\end{equation*}

Furthermore, the functor $\XU_n^p$ is a canonical extension of $\U_n^p$ to the categories of crossed modules, in the sense of the following proposition.

\begin{proposition}
 The following diagrams of categories and functors
\begin{equation}\label{diagram I}
\xymatrix @=20mm {
 \nLb \ar[d]_{\I_n^i}\ar[r]^{\U_n^p} & \pLb \ar[d]^{\I_p^i}  \\
\XnLb \ar[r]_{\XU_n^p} & \XpLb
}
\qquad \qquad \qquad
\xymatrix @=20mm {
 \XnLb \ar[d]_{\Up_n^j}\ar[r]^{\XU_n^p} & \XpLb \ar[d]^{\Up_p^j}  \\
\nLb \ar[r]_{\U_n^p} & \pLb
}
\end{equation}
are commutative for $i=0, 1$ and $j=0,1,2$.
\end{proposition}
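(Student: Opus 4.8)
The plan is to check each of the five commutativities directly, first on objects and then on morphisms, exploiting the fact that every functor appearing in the two squares acts on underlying vector spaces and linear maps either as the identity, as a coordinate projection, or as a quotient. Indeed, $\U_n^p$ and $\XU_n^p$ change no underlying space and only replace the $n$-ary bracket by its $p$-fold iterate from Proposition~\ref{leib}, while $\I_n^i$ and $\Up_n^j$ are assembled from the constructions $0$, $\id$, $\Img$ and the quotient. Hence, for each square I only need to verify that the two composites endow the common underlying space with the same $p$-ary bracket and (in the crossed-module cases) the same action; equality on morphisms will then be automatic, since along both routes a morphism $(\alpha,\beta)$ is sent to a pair built from the very same underlying linear maps.

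For the left square with $i=0$, both composites send a Leibniz $n$-algebra $\ele$ to $(0\to\U_n^p(\ele))$: the route right-then-down gives $\I_p^0(\U_n^p(\ele))$, while down-then-right gives $\XU_n^p(0\to\ele)=(\U_n^p(0)\to\U_n^p(\ele))$, and $\U_n^p(0)=0$ with $\U_n^p$ of the zero map the zero map. For $i=1$, both composites produce $(\U_n^p(\ele)\xrightarrow{\id}\U_n^p(\ele))$, so the single point needing attention is the action: in $\I_p^1(\U_n^p(\ele))$ it is by definition the $p$-ary self-bracket of $\U_n^p(\ele)$, whereas in $\XU_n^p(\I_n^1(\ele))$ it is the action manufactured in Lemma~\ref{Lemma action} from the $n$-ary self-action of $\ele$; by the explicit formula (\ref{equation action}) these are the same iterated bracket.

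For the right square the cases $j=1,2$ are immediate, since $\Up_n^1$ and $\Up_n^2$ merely select $\pe$ or $\ele$, $\U_n^p$ leaves the underlying space untouched, and this matches $\Up_p^j$ applied to the componentwise image $\XU_n^p(\mu)=(\U_n^p(\ele)\xrightarrow{\U_n^p(\mu)}\U_n^p(\pe))$. The case $j=0$ is the one with content: both composites deliver $\pe/\Img(\mu)$ as a vector space, because $\Img(\U_n^p(\mu))=\Img(\mu)$, and I must compare $p$-ary brackets. One composite equips $\pe/\Img(\mu)$ with the $p$-fold iterate of the $n$-ary bracket induced on the quotient, while the other takes the $p$-ary bracket of $\U_n^p(\pe)$---which by Proposition~\ref{leib} is exactly that iterate---and pushes it to the quotient by $\Img(\mu)$; since iteration commutes with passage to the quotient, the two brackets agree.

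The only genuinely non-formal verifications are therefore the action-matching in the $i=1$ square and the bracket-matching in the $j=0$ square, and I expect the latter to be the main, if mild, obstacle: it rests on the observation that an $n$-sided ideal of a Leibniz $n$-algebra remains a $p$-sided ideal after applying $\U_n^p$ (one checks directly that a nested $n$-ary bracket with one slot in the ideal again lands in it), so that the iterated bracket descends to $\pe/\Img(\mu)$ compatibly with the $n$-ary one. Once objectwise equality holds in all five cases, commutativity on morphisms requires no further argument, because $\Up_n^j$, $\XU_n^p$, $\I_n^i$ and $\U_n^p$ all act on the relevant morphisms through the identical underlying linear maps.
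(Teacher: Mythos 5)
Your proposal is correct and follows essentially the same route as the paper: the cases $i=0,1$ and $j=1,2$ are dispatched as formal/trivial, and the only substantive case is $j=0$, where one must identify $\U_n^p(\mathcal{P})/\Img\,\U_n^p(\mu)$ with $\U_n^p\big(\mathcal{P}/\Img(\mu)\big)$. The only difference is cosmetic: the paper deduces this identification from the exactness of $\U_n^p$ (Theorem \ref{Theorem perfect}), whereas you verify it by hand, checking that the iterated $p$-ary bracket descends to the quotient compatibly with the induced $n$-ary bracket --- which is precisely what exactness encodes.
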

\begin{proof}
For $i\in \{ 0,1\}$ and $j\in \{1,2\}$, the equalities $\XU_n^p \circ \I_n^i= \I_p^i \circ \U_n^p$ and $\U_n^p \circ \Up_n^j= \Up_p^j \circ \XU_n^p$ are trivially verified.
To prove that $\XU_n^p \circ \Up_n^0= \Up_p^0 \circ \XU_n^p$, we need to show that
\[
\U_n^p(\mathcal{P})/\Img \U_n^p(\mu)= \U_n^p\big(\mathcal{P}/\Img(\mu) \big)
\]
for any crossed module $\mu:\mathcal{L} \to \mathcal{P}$. This last equality is an easy consequence of exactness of the functor $\U_n^p$ (see Theorem \ref{Theorem perfect}).

\end{proof}

\section{${\frak D}_q^p$ functors and perfect objects}\label{Section D functor}

 Given a Leibniz $n$-algebra ${\ele}$, we know that ${{\ele}}^{\otimes n-1}$ is a Leibniz algebra  with respect to the $n$-ary bracket
\[
[l_1\otimes\cdots\otimes l_{n-1}, l'_1\otimes\cdots\otimes
l'_{n-1}]=\underset{\scriptsize{1\leq i\leq n}}\sum
l_1\otimes\cdots\otimes
[l_i,l'_1,\dots,l'_{n-1}]\otimes\cdots\otimes l_{n-1}
\]
 (see \cite[Proposition 3.4]{CLP}). This defines the so-called Daletskii-Takhtajan's functor \cite{DT} (see also \cite{Ga})
 \[
 {\frak D}_{n}:\nLb \to \Lb, \quad {\ele} \mapsto {\ele}^{\otimes (n-1)}.
 \]

The generalized Daletskii-Takhtajan's functor  ${\frak D}_{q}^p$ between the categories  $\qLb$ and $\pLb$ is defined as follows (see \cite[Remark 3.5]{CLP}). Let $\ele$  be a Leibniz $q$-algebra. Taking in mind that $q=\kappa(p-1)+1$, then ${\ele}^{\otimes\kappa}$ becomes a Leibniz $p$-algebra with respect to the bracket given by
\[
[l_{11}\otimes \cdots \otimes l_{1\kappa}, \cdots, l_{p1}\otimes \cdots \otimes l_{p\kappa}] =
\underset{\scriptsize{1\leq i\leq \kappa}}\sum l_{11}\otimes \cdots \otimes  [l_{1i}, l_{21}, \cdots, l_{2\kappa},\cdots, l_{p1}, \cdots, l_{p\kappa}]\otimes \cdots \otimes l_{1\kappa}
\]
This correspondence defines a functor
\[
{\frak D}_q^p:\qLb \to \pLb, \quad {\ele}\mapsto {\ele}^{\otimes\kappa}.
\]
It is clear that, in particular, ${\frak D}_n^2 = {\frak D}_n:\nLb\to \Lb$ considered above, and we have the following commutative diagram of categories and functors
\begin{equation}\label{cd for D}
\xymatrix{
	\qLb \ar[r]^{{\frak D}_q^p}  \ar[d]_{{\frak D}_q^n} & \pLb \ar[d]^{{\frak D}_p}  \\
	\nLb \ar[r]_{{\frak D}_n} & \Lb .
}
\end{equation}

One of our main interests is to analyze the problem, which is in somewhat the reverse task of that in Section \ref{Section U perfect}, studying whether the functor  ${\frak D}_q^p$ preserves perfect objects. Let's concentrate to the case of the functor ${\frak D}_{n}={\frak D}_n^2$. The examples immediately below shows that even ${\frak D}_{n}$ does not preserve perfect objects in general.

\begin{example}\label{Example countre} \
\begin{enumerate}
\item[i)] Consider the four-dimensional  Lie $3$-algebra $\ele$ with basis $\{e_1, e_2, e_3, e_4 \}$ and ternary bracket given by
\[
[e_1,e_2,e_3]= e_4, \quad [e_1,e_2,e_4]= - e_3, \quad [e_1, e_3,e_4]= e_2, \quad [e_2,e_3,e_4]=-e_1,
\]
together with the corresponding skew-symmetry, and zero elsewhere \cite{Fi}. Obviously $\ele$ is a perfect Lie $3$-algebra, and so perfect Leibniz $3$-algebra. Nevertheless $\ele \otimes \ele$ is not a perfect Leibniz algebra since $\D_3(\ele)=[{\ele} \otimes {\ele}, {\ele} \otimes {\ele}]$
is generated by the following set of elements

\noindent
$\{ -e_1 \otimes e_3, \ -e_1 \otimes e_4, \  e_2 \otimes e_1, \  e_2 \otimes e_3, \ e_2 \otimes e_4, \ e_3 \otimes e_1, \ -e_3 \otimes e_2,\ - e_3 \otimes e_3, \  e_3 \otimes e_4, \ -e_4 \otimes e_1, \ e_4 \otimes e_2, \ e_4 \otimes e_3, \ e_4 \otimes e_4, \ -e_1 \otimes e_1 + e_2 \otimes e_2,\  e_1 \otimes e_2 + e_2 \otimes e_1, \  -e_1 \otimes e_3 - e_3 \otimes e_1, \  e_1 \otimes e_4 + e_4 \otimes e_1 \}, $

\noindent and the  elements $e_1 \otimes e_1, e_2 \otimes e_2 \in \ele \otimes \ele $
 can not be presented as a linear combination of commutators $[e_i \otimes e_j, e_k \otimes e_l],$ $i,j,k,l \in\{1,2,3,4\}$.

\item[ii)]  Let $\mathbb{K}$  contain the field of rational numbers and ${\ele}$ be the five-dimensional perfect Leibniz algebra with generators $\{e_1,e_2,e_3,e_4,e_5 \}$  and  bracket given by
\[
 \begin{array}{lll} [e_2,e_1]=-e_3, &
 [e_1,e_2]=e_3,& [e_1,e_3]=-2e_1 \\

 [e_3,e_1]=2e_1, & [e_3,e_2]=-2e_2, & [e_2,e_3]=2e_2 \\

 [e_5,e_1]=e_4, & [e_4,e_2]=e_5,& [e_4,e_3]=-e_4\\ &
 [e_5,e_3]=e_5 \end{array}
\]
 and zero elsewhere \cite{Om}. By Theorem \ref{Theorem perfect}, $\U_3(\ele)={\ele}$ is a perfect Leibniz $3$-algebra with ternary bracket operation given by
\[
\begin{array}{lll} [e_1,e_1,e_2]=-2e_1, &  [e_2,e_1,e_2]=2e_2,& [e_3,e_1,e_3]=-4e_1 \\

 [e_1,e_2,e_1]=2e_1, & [e_2,e_1,e_3]=2e_3, & [e_3,e_2,e_3]=-4e_2 \\

 [e_1,e_2,e_3]=2e_3, & [e_2,e_2,e_1]=-2e_2,& [e_3,e_3,e_1]=4e_1\\

 [e_1,e_3,e_2]=-2e_3,& [e_2,e_3,e_1]=-2e_3, & [e_3,e_3,e_2]= 4e_2 \\

 [e_4,e_1,e_2]=-e_4,& [e_5,e_1,e_2]=e_5 & \\

  [e_4,e_2,e_1]=e_4,& [e_5,e_1,e_3]=-2e_4 & \\

   [e_4,e_2,e_3]=2e_5,& [e_5,e_2,e_1]=-e_5 & \\

    [e_4,e_3,e_2]=-2e_5,& [e_5,e_3,e_1]=2e_4 & \\
 \end{array}
\]
 and zero elsewhere. It is easy to check that ${\frak D}_3({\ele}) = {\ele}^{\otimes 2}$ is not a perfect Leibniz algebra, since the basic elements $e_4 \otimes e_5$ and $e_5 \otimes e_4$ can not be written as a linear combination of commutators $[e_i \otimes e_j, e_k \otimes e_l]$, $i, j, k, l \in \{1, 2, 3, 4, 5 \}$.

 \item[iii)] Let  $\mathbb{K}$  contain the field of rational numbers and ${\ele}$ be the three-dimensional perfect Leibniz $3$-algebra with basis $\{e_1,e_2,e_3 \}$  and  ternary bracket  given by
\[
 \begin{array}{lll} [e_1,e_1, e_2]= \alpha e_1, & [e_2,e_1, e_2]= -\alpha e_2, & [e_3,e_1, e_2]= \beta e_3
  \end{array}
\]
 and zero elsewhere, where $\alpha$ and $\beta$ are non-zero elements in $\mathbb{K}$ \cite[Example 1.11]{Rustam}.
   It is easy to check that ${\frak D}_3({\ele}) = {\ele}^{\otimes 2}$ is not a perfect Leibniz algebra, since the basic elements $e_1 \otimes e_2$ and $e_2 \otimes e_1$ can not be written as a linear combination of commutators $[e_i \otimes e_j, e_k \otimes e_l]$, $i, j, k, l \in \{1, 2, 3 \}$.

\end{enumerate}
\end{example}

\section{A bit about the homology of Leibniz $n$-algebras}\label{Section homology}

In \cite{LP} the Leibniz homology $HL^*(\textrm{g},M)$ of a Leibniz algebra $\textrm{g}$
with coefficients in a co-representation $M$ of $\textrm{g}$ is computed to be the cohomology of the Leibniz chain complex
$CL_*(\textrm{g},M)$ given by
\[
CL_k(\textrm{g},M)
 = (M\otimes \textrm{g}^{\otimes k})\;, \ \ k\geq 0
 \]
 with the boundary operator $d_k:CL_k(\textrm{g},M)\to
 CL_{k-1}(\textrm{g},M)$ defined by
 \begin{align*}
 d_k(m \!\otimes\! x_1 \!\otimes\! \dots \!\otimes\! x_k) = ([m,x_1]\!\otimes\!x_2\!\otimes\! \dots \!\otimes\! x_k)
 + \underset{\scriptsize{2\leq i \leq k}}\sum (-1)^i ([x_i, m]\!\otimes\! x_1\!\otimes\! \dots \!\otimes\! \widehat{x_i} \!\otimes\! \dots  \!\otimes\! x_k)\\
 + \underset{\scriptsize{1\leq i < j \leq k}}\sum (-1)^{j+1}(m\!\otimes\! x_1\!\otimes\! \dots \!\otimes\! x_{i-1}\!\otimes\! [x_i,x_j]\!\otimes\!\dots \!\otimes\! \widehat{x_j} \!\otimes\! \dots \!\otimes\! {x_k} )\, .
\end{align*}

 It is shown in \cite{CLP} that the functor $\D_n$ sends a free Leibniz $n$-algebra to a free Leibniz algebra. This property of $\D_n$ is used in \cite{Ca 2, CLP} to construct explicit (co-)chain complex which computes Quillen (co)homology of a Leibniz $n$-algebra with coefficients.
In particular,  it is proved in \cite[Proposition 3.1]{Ca 2} that if $\mathcal {M}$ is a co-representation of a Leibniz $n$-algebra $\mathcal{L}$, then $\mathcal{M}\otimes \mathcal{L}$  can be considered as a co-representation of the Leibniz algebra $\D_n(\mathcal{L})=\mathcal{L}^{\otimes (n-1)}$.
Then a chain complex $_nCL_*(\mathcal{L}, \mathcal{M})$ is defined to be the Leibniz complex $CL_*(\D_n(\mathcal{L}), \mathcal{\mathcal{M}\otimes \mathcal{L}})$,
  and the homology ${}_nHL_*(\mathcal{L},\mathcal {M})$ of the Leibniz $n$-algebra $\mathcal{L}$ with coefficients in the co-representation $\mathcal{M}$  is defined by
\[
{}_nHL_*(\mathcal{L},\mathcal {M})=H_*\big({}_nCL_*(\mathcal{L},\mathcal{M})\big)
=HL_*\big(\D_{n}(\mathcal{L}),\mathcal{M}\otimes \mathcal{L}\big)\,.
\]

Note that for $n=2$ we have ${}_2CL_k(\mathcal {L},\mathcal
{M})\cong CL_{k+1}(\mathcal {L},\mathcal {M})$ for all $m\geq 0$ and hence
\begin{equation}\label{eq3}
{}_2HL_k(\mathcal {L},\mathcal {M})\cong HL_{k+1}(\mathcal{L},\mathcal {M})\,.
\end{equation}

In Section \ref{Section UCE} below, we will use the obvious fact that if $\mathcal {M}$ is a trivial co-representation of $L$, then ${}_nHL_0(\mathcal{L},\mathcal {M}) = \mathcal {M}\otimes \mathcal {L}/[\mathcal {L}^n]$. In particular, for $ \mathcal {M} = \mathbb{K}$ we get  ${}_nHL_0(\mathcal{L},\mathbb{K}) = \mathcal {L}/[\mathcal {L}^n]$.

\begin{proposition}\label{Prop homology complex }
If $\mathcal{M}$ is a trivial co-representation of a Leibniz algebra $\mathcal{L}$, then there is a morphism of chain complexes
\[
h_*: {}_{n}CL_*(\U_n(\mathcal{L}), \mathcal{M}) \lra {}_2CL_*(\mathcal{L}, \mathcal{M})\, ,
\]
 which is an isomorphism in dimension $0$.
Moreover, if  $\mathcal{L}$ is a perfect Leibniz algebra, then $h_*$ is an epimorphism.
\end{proposition}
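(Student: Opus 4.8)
The plan is to realise $h_*$ as the morphism of Leibniz chain complexes induced by a single Leibniz algebra homomorphism together with the identity on coefficients. By definition
\[
{}_nCL_*(\U_n(\mathcal{L}),\mathcal{M})=CL_*\big(\D_n(\U_n(\mathcal{L})),\,\mathcal{M}\otimes \U_n(\mathcal{L})\big),\qquad {}_2CL_*(\mathcal{L},\mathcal{M})=CL_*\big(\mathcal{L},\,\mathcal{M}\otimes \mathcal{L}\big),
\]
so both are ordinary Leibniz complexes with the \emph{same} coefficient space $\mathcal{M}\otimes\mathcal{L}$, but taken over the two different Leibniz algebras $\D_n(\U_n(\mathcal{L}))=\mathcal{L}^{\otimes(n-1)}$ and $\mathcal{L}$. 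I would introduce the iterated-bracket map
\[
\phi\colon \D_n(\U_n(\mathcal{L}))\lra \mathcal{L},\qquad \phi(l_1\otimes\cdots\otimes l_{n-1})=[l_1,[l_2,\dots,[l_{n-2},l_{n-1}]\cdots]],
\]
and define $h_*$ to be the chain map induced by the pair $(\phi,\id_{\mathcal{M}\otimes\mathcal{L}})$, that is, $h_k(u\otimes X_1\otimes\cdots\otimes X_k)=u\otimes\phi(X_1)\otimes\cdots\otimes\phi(X_k)$ for $u\in\mathcal{M}\otimes\mathcal{L}$ and $X_1,\dots,X_k\in\mathcal{L}^{\otimes(n-1)}$.

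The conceptual heart is that $\phi$ is a homomorphism of Leibniz algebras. Here I would use that the map $\ad_a\colon x\mapsto[x,a]$ is a derivation of the bracket of $\mathcal{L}$ (the $n=2$ case of the observation following (\ref{ad}), i.e. a rewriting of the Leibniz identity); consequently, by Proposition \ref{derivation} applied to the binary operation, $\ad_a$ is also a derivation of the $(n-1)$-fold iterated bracket $\phi$. Since in $\D_n(\U_n(\mathcal{L}))$ one has $[l_i,l'_1,\dots,l'_{n-1}]=[l_i,\phi(l'_1\otimes\cdots\otimes l'_{n-1})]=\ad_{\phi(Y)}(l_i)$, the Daletskii--Takhtajan bracket of $X=l_1\otimes\cdots\otimes l_{n-1}$ and $Y=l'_1\otimes\cdots\otimes l'_{n-1}$ takes the form $[X,Y]=\sum_{i}l_1\otimes\cdots\otimes \ad_{\phi(Y)}(l_i)\otimes\cdots\otimes l_{n-1}$, and applying $\phi$ together with the derivation property collapses this sum to $\ad_{\phi(Y)}(\phi(X))=[\phi(X),\phi(Y)]$. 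Thus $\phi([X,Y])=[\phi(X),\phi(Y)]$.

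It then remains to check that $\id_{\mathcal{M}\otimes\mathcal{L}}$ is $\phi$-equivariant for the two coefficient structures, so that $(\phi,\id)$ induces a chain map by functoriality of the Leibniz complex. Using the explicit co-representation structure on $\mathcal{M}\otimes\mathcal{L}$ from \cite[Proposition 3.1]{Ca 2} and the hypothesis that $\mathcal{M}$ is trivial, the two actions of $\D_n(\U_n(\mathcal{L}))$ reduce to $[m\otimes l,X]=m\otimes[l,\phi(X)]$ and $[X,m\otimes l]=m\otimes[\phi(X),l]$, which are precisely the $\mathcal{L}$-actions transported along $\phi$; this is the step demanding the most careful bookkeeping, and it is where triviality of $\mathcal{M}$ is essential. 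In dimension $0$ the map $h_0$ is the identity of $\mathcal{M}\otimes\mathcal{L}$, hence an isomorphism.

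For the last assertion, note that $h_k=\id\otimes\phi^{\otimes k}$, so $h_k$ is surjective as soon as $\phi$ is. When $\mathcal{L}$ is perfect one has $\Img\phi=[\mathcal{L},[\mathcal{L},\dots,[\mathcal{L},\mathcal{L}]\cdots]]$, and a short induction on the length of the iterated bracket (using $[\mathcal{L},\mathcal{L}]=\mathcal{L}$ at each stage) shows $\Img\phi=\mathcal{L}$; hence $\phi$ is onto and $h_*$ is an epimorphism. I expect the main obstacle to be the equivariance verification for the coefficient modules, since it requires unwinding the precise action formulas of \cite{Ca 2}; by contrast the homomorphism property of $\phi$ is clean once the derivation trick of Proposition \ref{derivation} is in place.
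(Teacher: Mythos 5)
Your proposal is correct and follows essentially the same route as the paper: the paper's proof defines exactly the same chain map $h_k=\id_{\mathcal{M}\otimes\mathcal{L}}\otimes\phi^{\otimes k}$ (identity on the coefficients $\mathcal{M}\otimes\mathcal{L}$, iterated bracket on each $\mathcal{L}^{\otimes(n-1)}$ factor), states the same co-representation formulas, and then leaves the chain-map verification to the reader as ``straightforward but tedious'' --- precisely the verification you supply cleanly via Proposition \ref{derivation} (showing $\phi$ is a Leibniz algebra homomorphism $\D_n\U_n(\mathcal{L})\to\mathcal{L}$ plus equivariance of coefficients), and the dimension-$0$ and epimorphism arguments coincide as well. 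One minor slip that does not affect your argument: with the conventions of \cite{Ca 2} reproduced in the paper, the right action is $[X,m\otimes l]=-\,m\otimes[l,\phi(X)]$ rather than $m\otimes[\phi(X),l]$ (these differ in a Leibniz algebra), but it is still exactly the $\mathcal{L}$-action transported along $\phi$, so the equivariance you need does hold.
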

\begin{proof}
First note that $\mathcal{M}$ is considered as a trivial co-representation of the Leibniz $n$-algebra $\U_n(\mathcal{L})=\mathcal{L}$.
Then $\mathcal{M}\otimes \mathcal{L}$ is a co-representation of $\D_n\U_n(\mathcal{L})=\mathcal{L}^{\otimes(n-1)}$ with respect to the linear maps (actions)
\[
[- , -]:\mathcal{M}\otimes \mathcal{L} \otimes \mathcal{L}^{\otimes(n-1)} \lra \mathcal{M}\otimes \mathcal{L} \quad \text{and} \quad
[- , -]: \mathcal{L}^{\otimes(n-1)} \otimes \mathcal{M}\otimes \mathcal{L} \lra \mathcal{M}\otimes \mathcal{L}
\]
given by
\begin{align*}
& [m\otimes l, l_1\otimes \dots \otimes l_{n-1}]= m\otimes [l, l_1, \dots, l_{n-1}]  \\
 \text{and} \quad &[l_1\otimes \dots \otimes l_{n-1}, m\otimes l] = - m\otimes [l, l_1, \dots, l_{n-1}].
\end{align*}
 In particular, the co-representation structure on $\mathcal{M}\otimes \mathcal{L}$ over the Leibniz algebra $\mathcal{L}$ is given by
 $[m\otimes l, l_1]=m\otimes [l, l_1]$ and $[l_1, m\otimes l] = - m\otimes [l, l_1]$.

Next, note that
\begin{align*}
{}_{n}CL_k(\U_n(\mathcal{L}), \mathcal{M}) &= CL_k(\D_n\U_n(\mathcal{L}), \mathcal{M}\otimes \mathcal{L}) = CL_k(\mathcal{L}^{\otimes(n-1)}, \mathcal{M}\otimes \mathcal{L}) \\
&= \mathcal{M}\otimes \mathcal{L} \otimes  \big({\mathcal{L}^{\otimes(n-1)}}\big)^{\otimes k} = \mathcal{M}\otimes \mathcal{L} \otimes {\mathcal{L}^{\otimes k(n-1)}},
\end{align*}
whilst
\[
{}_2CL_k(\mathcal{L}, \mathcal{M})=\mathcal{M}\otimes \mathcal{L}\otimes \mathcal{L}^{\otimes k}.
\]
And the map $h_k: \mathcal{M}\otimes \mathcal{L} \otimes {\mathcal{L}^{\otimes k(n-1)}} \lra \mathcal{M}\otimes \mathcal{L}\otimes \mathcal{L}^{\otimes k}$
is given by
\begin{align*}
h_k&(m\otimes l_1^1\otimes l_2^1 \otimes \dots \otimes l_{n-1}^1\otimes \dots\otimes l_1^k\otimes l_2^k \otimes \dots \otimes l_{n-1}^k)\\
&=m\otimes l \otimes [l_1^1, [l_2^1, \dots [l_{n-2}^1, l_{n-1}^1] \dots ]] \otimes \dots \otimes [l_1^k, [l_2^k, \dots [l_{n-2}^k, l_{n-1}^k] \dots ]] \, .
\end{align*}
The details are straightforward but tedious calculations and are left to the reader.
It is clear by the definition of $h_k$ that it is an epimorphism, when $\mathcal{L}$ is a perfect Leibniz algebra.
\end{proof}

\begin{remark}
Given a trivial co-representation $\mathcal{M}$ of a Leibniz algebra $\mathcal{L}$,
there is a homomorphism $\overline{h}_*: {}_{n}HL_*(\U_n(\mathcal{L}), \mathcal{M}) \lra {}_2HL_*(\mathcal{L}, \mathcal{M})$, with $\overline{h}_0$ an epimorphism. Moreover, if $\mathcal{L}$ is a perfect Leibniz algebra, then $\overline{h}_0$ is the identity map and $\overline{h}_1$ is an epimorphism.
\end{remark}

\section{$\U_n^p$ functors and universal central extensions}\label{Section UCE}

\begin{definition} \cite{Ca}
An extension of Leibniz $n$-algebras
\begin{equation}\label{equation CE}
0 \lra M \lra {\ka} \stackrel{\pi} \lra {\ele} \lra 0
\end{equation}
is called central extension of $\ele$, if $M\subseteq \Z(\mathcal{K})$.
 \end{definition}

Let us remark that an extension $0 \to M \to {\ka} \to {\ele} \to 0$ in $\nLb$ is central if and
only if the ideal $[M, {\mathcal{K}}^{n-1}]$ is zero.
Moreover, it is known that the category of Leibniz $n$-algebras is a semi-abelian category and
the definition of central extension agrees with the same notion in the general framework of semi-abelian categories
 \cite{JaKe}, particularly, it is induced by the adjunction between $\nLb$ and its Birkhoff subcategory of vector spaces (see \cite{CKLT}).

\begin{remark}

If (\ref{equation CE}) is a central extension of a Leibniz $n$-algebra $\ele$, then $\pi: {\ka} \to {\ele}$ is a crossed module with the action of $\ele$ on $\ka$ defined in the standard way by choosing pre-images of elements of $\ele$ and tacking the $n$-ary bracket in $\ka$ (see \cite[Example 2 (iii)]{CKL1}).

On the other hand,  given a crossed module of Leibniz $n$-algebras $\pi: {\ka} \to {\ele}$, the kernel of $\pi$ is in the center of $\ka$ thanks to the condition (CM2), and we get a central extension $0 \to \Ker\pi \to {\ka} \stackrel{\pi} \to {\ele} \to 0$  of $\ele$.
\end{remark}

The following example immediately below shows that, unlike the case of the functor $\U_n$, the functor $\D_n$ does not preserve crossed modules.

\begin{example}
 Let $\pi : K \to L$ be the crossed module of Leibniz $n$-algebras provided by the central extension (\ref{equation CE}). Then $\D_n(\pi)=\pi^{\otimes (n-1)}  : \ka^{\otimes (n-1)}  \to \ele^{\otimes (n-1)}$ may not be a crossed module, since the kernel of $\pi^{\otimes (n-1)}$ is not in the center of $\ka^{\otimes (n-1)}$ in general.
\end{example}

\begin{definition} \label{Def UCE}
The central extension $(\ref{equation CE})$ is said to be {\it universal} if for every central extension $ 0 \to M' \to {\ka}' \stackrel{\pi'} \to {\ele} \to 0$ there exists a unique homomorphism $h : {\ka} \to {\ka}'$ such that $\pi'\circ h = \pi$.
\end{definition}

It is well-known  that a Leibniz $n$-algebra ${\ele}$ admits a universal central
extension if and only if ${\ele}$ is perfect. In such a case, the universal central extension can be described via the non-abelian tensor power of $\ele$ (see \cite{Ca, Ca 1}). For the convenience of the reader, we will briefly recall this construction.

Let ${\ele}$ be a Leibniz $n$-algebra. The tensor power ${\ele}^{\otimes n}$ can be endowed with a structure of Leibniz $n$-algebra by means of the following bracket:
\begin{equation} \label{tensor}
\begin{array}{l} [x_{11} \otimes \dots \otimes x_{n1}, x_{12} \otimes \dots \otimes x_{n2}, \dots, x_{1n} \otimes \dots \otimes x_{nn}]  \\
= \displaystyle \sum_{i=1}^n x_{11} \otimes \dots \otimes [x_{i1}, [x_{12}, \dots, x_{n2}], \dots, [x_{1n}, \dots, x_{nn}]] \otimes \dots \otimes x_{n1} \, .
\end{array}
\end{equation}

Now suppose $\delta_2: {\ele}^{\otimes (2n-1)} \to {\ele}^{\otimes n}$ is the second boundary map in the homology complex $_nCL(\ele,\mathbb{K})$, where $\mathbb{K}$ is considered as the trivial co-representation of $\ele$. Hence,
\begin{align*}
\delta_2(x_{1} \otimes \dots \otimes x_{2n-1})=& [x_1, x_2, \dots, x_n]\otimes x_{n+1}\otimes \dots \otimes x_{2n-1}
\\
&-\underset{\scriptsize{1\leq i \leq n}}\sum x_1\otimes \dots \otimes [x_{i}, x_{n+1}, \dots, x_{2n-1}]\otimes \dots\otimes x_n \, .
\end{align*}
Following to \cite{Ca 1}, $\Coker(\delta_2: {\ele}^{\otimes (2n-1)} \to {\ele}^{\otimes n})$
is equipped with a structure of Leibniz $n$-algebra induced by the bracket (\ref{tensor}) defined on ${\ele}^{\otimes n}$.
This Leibniz $n$-algebra is denoted by ${\ele}^{\ast n}$  and it is called the non-abelian tensor power of $\ele$.

Let us denote the image of $x_1 \otimes \dots \otimes x_n \in {\ele}^{\otimes n}$ into ${\ele}^{\ast n}$  by $x_1 \ast \dots \ast x_n$. Then one verifies  (see \cite{Ca}) that the bracket in ${\ele}^{\ast n}$ is given by
\begin{equation}
\begin{array}{l}
[x_{11} \ast \dots \ast x_{n1}, \ x_{12} \ast \dots \ast x_{n2}, \ \dots , \ x_{1n} \ast \dots \ast x_{nn}] \\

= [x_{11}, \dots, x_{n1}] \ast [x_{12}, \dots, x_{n2}] \ast \dots \ast [ x_{1n}, \dots, x_{nn}]
\end{array}
\end{equation}

There is an exact sequence of Leibniz $n$-algebras
\begin{equation*} \label{4 term seq}
0 \lra {_n}HL_1({\ele}) \lra {\ele}^{\ast n} \stackrel{[-,\stackrel{n}{\cdots}, -]}{\lra} {\ele} \to {_n}HL_0({\ele})\lra  0,
\end{equation*}
where ${_n}HL_0({\ele})={_n}HL_0({\ele}, \mathbb{K})= \ele / [\ele^n]$ and ${_n}HL_1({\ele})={_n}HL_1({\ele}, \mathbb{K})$ are the zero and the first homology of the Leibniz $n$-algebra ${\ele}$ with coefficients in the trivial co-representation $\mathbb{K}$, respectively, and both are considered as abelian Leibniz $n$-algebras. In particular, if $\ele$ is a perfect Leibniz $n$-algebra, then ${_n}HL_0({\ele})=0$ and
\begin{equation} \label{central extension}
0 \lra {_n}HL_1({\ele}) \lra {\ele}^{\ast n} \stackrel{[-,\stackrel{n}{\cdots}, -]}{\lra} {\ele} \lra 0
\end{equation}
is the universal central extension of $\ele$.

\

Having in mind  Theorem \ref{Theorem perfect}, by applying the functor $\U_n^p$ to (\ref{central extension}) we get again the exact sequence
\begin{equation} \label{central extension in pLb}
0 \lra \U_n^p({_n}HL_1({\ele}) )\lra \U_n^p({\ele}^{\ast n}) \stackrel{}{\lra} \U_n^p({\ele}) \lra 0
\end{equation}
in the category $\pLb$. The $p$-ary bracket in  $\U_n^p({\ele}^{\ast n} )={\ele}^{\ast n}$ is given by

\begin{align*}
[x_{11} \ast \dots  & \ast x_{1n},x_{21} \ast \dots \ast x_{2n}, \dots, x_{p1} \ast \dots \ast x_{pn}] \\
=[x_{11} \ast & \dots \ast x_{1n}, \dots,  x_{(n-1)1} \ast \dots \ast x_{(n-1)n},[x_{n1} \ast \dots \ast x_{nn}, \dots, x_{(2n-2)1} \ast \dots \ast x_{(2n-2)n},\\
&[ \dots,[ x_{(q-n+1)1} \ast \dots \ast x_{(q-n+1)n}, \dots,  x_{q1} \ast \dots \ast x_{qn}] \dots ]]] \\
=[x_{11} \ast &\dots \ast x_{1n}, \dots,  x_{(n-1)1} \ast \dots \ast x_{(n-1)n},[x_{n1} \ast \dots \ast x_{nn}, \dots, x_{(2n-2)1} \ast \dots \ast x_{(2n-2)n},\\
&[ \dots,[[ x_{(q-n+1)1} , \dots , x_{(q-n+1)n}]
\ast  \dots \ast  [x_{q1}, \dots, x_{qn}]] \dots]]] \\
= \dots  = &
[x_{11}, \dots, x_{1n}]\ast \dots \ast [x_{(n-1)1}, \dots, x_{(n-1)n}] \ast \left[  \left[x_{n1}, \dots , x_{nn}\right], \dots, \right.\\
&\left.  \left. \left[x_{(2n-2)1}, \dots, x_{(2n-2)n} \right], \left[ \dots \left[ x_{(p-n+1)1}, \dots, x_{(p-n+1)n} \right], \dots, \left[ x_{p1} , \dots , x_{pn} \right] \right] \dots  \right] \right].
\end{align*}

Consequently, the map ${\ele}^{\ast n}  \stackrel{[-, \stackrel{n}{\cdots},-]}\lra {\ele}$, $x_1 \ast  \dots \ast x_n \mapsto [x_1, \dots, x_n]$ is a
homomorphism of Leibniz $p$-algebras and (\ref{central extension in pLb}) is  a central extension of Leibniz $p$-algebras.

Thanks again to Theorem \ref{Theorem perfect}, ${\ele}= \U_n^p({\ele})$ is perfect as Leibniz $p$-algebra and admits the universal central extension
\[
0 \lra {_{p}HL_1({\ele})} \lra {\ele}^{\ast p}  \stackrel{[-, \stackrel{p}{\cdots},-]}\lra {\ele} \lra 0
\]
in the category ${\pLb}$.
 Since
 \[
 0 \lra {_{n}HL_1({\ele})} \lra {\ele}^{\ast n}  \stackrel{[-, \stackrel{n}{\cdots},-]}\lra {\ele} \lra 0,
 \]
  endowed with the above structure of Leibniz $p$-algebra, is a central extension in ${\pLb}$, then there exists a unique homomorphism of Leibniz $p$-algebras
\[
\phi : {\ele}^{\ast p} \lra {\ele}^{\ast n}
\]
which is given by $\phi(x_1 \ast \dots \ast x_p) = x_1\ast \dots \ast x_{n-1} \ast
 \left[x_{n}, \dots, x_{2n-2},\left[ \dots, \left[x_{p-n+1},
\dots,x_{p}\right] \right] \right]$,
and makes commutative the following diagram in $\pLb$:
\begin{equation} \label{diagram}
\xymatrix{
 0  \ar[r] & {_{p}}HL_1({\ele})  \ar[r] \ar[d] & {\ele}^{\ast p} \ar[r]^{\quad\quad  [-,\stackrel{p}\dots,-] \quad\quad} \ar[d]_{\phi} & {\ele} \ar[r] \ar@{=}[d] &0\\
0  \ar[r] & {_{n}}HL_1({\ele})  \ar[r]  & {\ele}^{\ast n} \ar[r]^{\quad\quad  [-,\stackrel{n}\dots,-] \quad\quad}  & {\ele} \ar[r] &0
}
\end{equation}
Moreover, obviously $\phi$ is a surjective homomorphism.

\begin{corollary} If ${\ele}$ is a perfect Leibniz $p$-algebra, then
\begin{align*}
{\ele} \ast \stackrel{n-1}\dots  \ast {\ele} \ast \Z({\ele}) \ast {\ele} \ast \stackrel{p-n}\dots \ast
{\ele}\  + \ {\ele} \ast& \stackrel{n}\dots \ast {\ele} \ast \Z({\ele}) \ast {\ele} \ast \stackrel{p-n-1}\dots \ast
{\ele}   \\
& + \  \dots \ + {\ele} \ast \stackrel{p-1}\dots \ast {\ele} \ast  Z({\ele}) \ \subseteq \ {\Ker}(\phi) \ .
\end{align*}
\end{corollary}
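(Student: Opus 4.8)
The plan is to reduce the containment to a pointwise verification on the generating symbols of each summand, and then to exploit the explicit formula for $\phi$ together with the defining property of the center. First I would recall that $\phi$ is given on generators by
\[
\phi(x_1 \ast \dots \ast x_p) = x_1 \ast \dots \ast x_{n-1} \ast \left[x_{n}, \dots, x_{2n-2}, \left[\dots, \left[x_{p-n+1}, \dots, x_p\right]\right]\right],
\]
so that the factors $x_1, \dots, x_{n-1}$ are carried over unchanged, while the remaining variables $x_n, \dots, x_p$ are folded into a single iterated $n$-ary bracket that lands in $\ele$. Since each summand $\ele \ast \dots \ast \Z(\ele) \ast \dots \ast \ele$ (with $\Z(\ele)$ occupying the $j$-th slot, $n \leq j \leq p$) is spanned by the symbols $x_1 \ast \dots \ast x_p$ with $x_j \in \Z(\ele)$, and $\phi$ is linear, it suffices to show that $\phi$ annihilates each such symbol for every $j\in\{n, \dots, p\}$.

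Next I would analyze the combinatorics of the nested bracket in the displayed formula. The expression on $x_n, \dots, x_p$ is a nesting of $k-1$ $n$-ary brackets (there are $p-n+1=(k-1)(n-1)+1$ leaf variables), and each of $x_n, \dots, x_p$ sits in exactly one direct argument slot of one of these brackets: $x_n, \dots, x_{2n-2}$ are the leaf arguments of the outermost bracket, the next $n-1$ variables are the leaf arguments of the following bracket, and so on down to $x_{p-n+1}, \dots, x_p$ as the $n$ arguments of the innermost bracket. Thus for any $j$ with $n \leq j \leq p$, the variable $x_j$ occupies a genuine leaf slot of one of the constituent $n$-ary brackets.

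The key step is then immediate from the definition of the center: if $x_j = z \in \Z(\ele)$, then the $n$-ary bracket having $z$ in one of its slots vanishes, because $[l_1, \dots, l_{i-1}, z, l_{i+1}, \dots, l_n] = 0$ for all choices of the remaining entries. By the $n$-linearity of each enclosing bracket, this vanishing propagates outward, so the entire iterated bracket equals $0$, whence $\phi(x_1 \ast \dots \ast x_p) = x_1 \ast \dots \ast x_{n-1} \ast 0 = 0$. This shows that every generating symbol of every summand lies in $\Ker(\phi)$, and therefore the whole sum is contained in $\Ker(\phi)$. I expect the only delicate point to be the bookkeeping that identifies, for each admissible $j$, the specific bracket into whose slot $x_j$ lands; once that correspondence is set up, the centrality condition together with multilinearity finishes the argument with no further computation.
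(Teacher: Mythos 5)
Your mechanism is exactly the one the paper leaves implicit: the corollary carries no separate proof and is meant to be read off from the displayed formula for $\phi$, and your bookkeeping is correct (the entries in positions $n,\dots,p$ are precisely the leaf slots of the $k-1$ nested $n$-ary brackets, and a zero in any slot propagates outward by multilinearity). So \emph{if} $\Z(\ele)$ is understood as the center of $\ele$ with respect to its $n$-ary bracket, your proof is complete and is the intended argument.

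The gap is in which center is meant. The corollary speaks of $\ele$ as a perfect Leibniz \emph{$p$-algebra}, and the paper's definition of center, applied to a Leibniz $p$-algebra, gives the $p$-center: those $z$ annihilating every $p$-ary bracket. Your key step (``the $n$-ary bracket having $z$ in one of its slots vanishes'') uses the strictly stronger $n$-centrality, and the two centers differ even on perfect objects. For instance, with $n=2$, $p=3$, over $\mathbb{K}\supseteq\mathbb{Q}$: take a perfect Lie algebra $\g$ with a nonzero central element $c$ and a right $\g$-module $M$ on which $c$ acts bijectively (e.g.\ $\g=\mathfrak{sl}_2\ltimes H_1$ with $H_1$ the Heisenberg algebra and $M$ its Fock module); then $\ele=\g\oplus M$ with bracket $[(g,m),(g',m')]=([g,g'],mg')$ is a perfect Leibniz algebra, and $z=(c,0)$ satisfies $[z,\ele]=0$ and $[\ele,[\ele,z]]=0=[\ele,[z,\ele]]$, hence lies in the center of the Leibniz $3$-algebra $\U_2^3(\ele)$, yet $[(g,m),z]=(0,mc)\neq 0$; so $z$ is not in the Leibniz-algebra center, and the inner bracket in $\phi(x_1\ast x_2\ast z)=x_1\ast[x_2,z]$ does \emph{not} vanish. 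Under this literal reading the corollary remains true, but the proof must genuinely use perfectness, which your argument never invokes --- a warning sign, since it is the stated hypothesis. The repair: for $z$ in a position $n\le j\le p-1$, expand the last entry $x_p$ into brackets (perfectness), turning the depth-$(k-1)$ nest into a sum of honest $p$-ary brackets, each zero by $p$-centrality, so the nest itself is $0$; for $z$ in position $p$, write $x_1=\sum[a_1,\dots,a_n]$ and use the relation $[a_1,\dots,a_n]\ast y_2\ast\cdots\ast y_{n-1}\ast w=\sum_{i=1}^n a_1\ast\cdots\ast[a_i,y_2,\dots,y_{n-1},w]\ast\cdots\ast a_n$ holding in $\ele^{\ast n}$ (it is the image of $\delta_2$), whose right-hand terms vanish because each $[a_i,y_2,\dots,y_{n-1},w]$ is precisely a $p$-ary bracket with $z$ in its last slot; this gives $x_1\ast\cdots\ast x_{n-1}\ast w=0$ even though $w\neq 0$. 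You should either justify reading $\Z(\ele)$ as the $n$-center, or add these two perfectness arguments.
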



\

\subsection*{Acknowledgements}

Authors were supported by  Agencia Estatal de Investigaci\'on (Spain), grant PID2020-115155GB-I00.
Emzar Khmaladze was supported by Shota Rustaveli National Science Foundation of Georgia, grant FR-22-199. He is grateful to the University of Santiago de Compostela for hospitality.

\end{document}